\documentclass{amsart}

\usepackage{amsmath,amsfonts,amsthm,amstext,amssymb,amscd,amsbsy}
\usepackage[latin1]{inputenc}
\usepackage[T1]{fontenc}
\usepackage{indentfirst} 
\usepackage[dvips]{color}
\usepackage{epsfig}
\usepackage{color}
\usepackage{graphics}

\hyphenation{IMECC UNICAMP}

\newtheorem{definicao}{Definition}

\newtheorem{lema}[definicao]{Lemma}
\newtheorem{obs}[definicao]{Remark}
\newtheorem{corolario}[definicao]{Corollary}
\newcommand{\rn}[1]{\mathbb{R}^{#1}}

\newcommand{\bx}{{\bf x}}
\newcommand{\ze}{{\bf 0}}

\newcommand{\bz}{{\bf z}}

\def\bq{\begin{equation}}
\def\eq{\end{equation}}

\title[Sliding vector field over tori and spheres]{On non-smooth vector fields having a torus or a sphere as the sliding manifold}
\author{Ricardo Miranda Martins}
\address{Instituto de Matem\'atica, Estat\'istica e Computa\c c\~ao Cient\'ifica. Universidade Estadual de Campinas. 13083--859. Campinas, SP, Brazil.}
\email{rmiranda@ime.unicamp.br}
\thanks{The author is supported by FAPESP--BRAZIL grants 2010/13371-9 and 2012/06879-1.}

\begin{document}

\begin{abstract}In this paper we consider a non-smooth vector field $Z=(X,Y)$, where $X,Y$ are linear vector fields in dimension 3 and the discontinuity manifold $\Sigma$ of $Z$ is or the usual embedded torus or the unitary sphere at origin. We suppose that $\Sigma$ is a sliding (stable/unstable) manifold with tangencies, by considering $X,Y$ inelastic over $\Sigma$. In each case, we study the tangencies of the vector field $Z$ with $\Sigma$ and describe the behavior of the trajectories of the sliding vector field over $\Sigma$: they are basically closed.
\end{abstract}

\maketitle
\section{Introduction}

  The theory of the non-smooth dynamical systems has been developing very fast in the last years, mainly due to its strong relation with branches of applied science, such as mechanical and aerospace engineering. Indeed, discontinuous systems are in the boundary between mathematics, physics and engineering. 

In many cases, the non-smooth systems are described by systems of ODEs in such a way that each system is defined in a region of the phase portrait. The boundary of such regions is called the discontinuity manifold \cite{1}.

There are a lot of research being made about the local behavior of non-smooth systems, near the discontinuity manifold \cite{2}. An interesting phenomenon that occurs frequently is the sliding motion, when the trajectories on both sides of the discontinuity manifold slides over the manifold after the meeting, and there remains until reaching the boundary of the sliding region \cite{3,4}. This phenomenon has been studied specially on relay control systems and systems with dry friction \cite{5,6}.

The mathematical formalization of the theory of discontinuous systems was made precise by Filippov \cite{7} and Teixeira-Sotomayor \cite{8,9}, which classified and distinguished three types of regions on the discontinuity manifold (sewing region, sliding region and escape region). These regions fully describes what can happen on such manifold. Teixeira and Sotomayor \cite{9} have introduced the concept of regularization of discontinuous vector fields, which allows to study phenomena that occurs in regular $C^r$ vector fields, but in the context of non-smooth vector fields.

One of the biggest limitations of the theory of non-smooth systems until now is the lack of global results \cite{4}. In particular, the discontinuity manifold is often taken as a hyperplane.

This article presents some results for the case where the discontinuity manifold is a torus or a sphere in the three-dimensional Euclidean space. We consider inelastic vector fields, which are a special class of vector fields with the additional property that the discontinuity manifold is composed only of escape and sliding regions (separated by tangency curves). Inelastic vector fields are widely used in mechanical models \cite{10}. 

This paper is outlined as follows: in Section 2, we introduce the concept of non-smooth vector fields. In Section 3 we discuss inelastic vector fields. In Section 4 we state the main results. In Sections 5 and 6 we proof the main results.
\section{\label{nsvf}Non-smooth vector fields}

In this section we recall some concepts about non-smooth dynamical systems, according to the Filippov convention \cite{7}. We shall adopt $\rn{3}$ as the ambient space, but almost of the definitions and equations can be made for arbitrary dimensions \cite{4}.

Let $h:\rn{3},0\rightarrow\rn{},0$ be a function with $0$ as regular value and consider the manifold $M=h^{-1}(\{0\})$. Also define $M^+=h^{-1}((0,\infty))$ and $M^-=h^{-1}((-\infty,0))$. Put $\Lambda(3)$ the space of the $C^r$-vector fields over $\rn{3},0$, for $r\geq 2$, and $\Omega(3)$ the space of the vector fields $Z$ defined over $\rn{3},0$ as
\[
Z(\bx)=\left\{
\begin{array}{l}
X(\bx), \ {\rm if} \ h(\bx)>0,\\
Y(\bx), \ {\rm if} \ h(\bx)<0,\\
\end{array}
\right.
\]
where $X,Y\in\Lambda(3)$ and $\bx=(x_1,x_2,x_3)$. We denote $Z=(X,Y)$. We can identify $\Omega(3)=\Lambda(3)\times\Lambda(3)$ and inherit a topology from this product.

Now we define $Z$ over $M$. We break $M$ into three (possibly disconnected) regions, according to the following rule (we recall that $Xh(\bx)=X(\bx)\cdot \nabla h(\bx)$):\\
\noindent $M_1=\{\bx\in M; Xh(\bx)\cdot Yh(\bx)>0\}$ is the sweing region;\\
\noindent $M_2=\{\bx\in M; Xh(\bx)>0, \ Yh(\bx)<0\}$ is the escape region;\\
\noindent $M_3=\{\bx\in M; Xh(\bx)<0, \ Yh(\bx)>0\}$ is the sliding region.

If $p\in M$ and $Xh(p)=0$, then $p$ is said to be a tangency point for $X$. If $Xh(p)=0$ and $X^2h(p)\neq 0$ then the tangency is quadratic, and if $Xh(p)=X^2h(p)=0$ but $X^3h(p)\neq 0$ the tangency is cubic.

The trajectories of $Z$ through a point $p$ are defined in the following way:\\
a) If $p\in M^+$, then the trajectory of $Z$ through $p$ is trajectory of $X$ through $p$, until the trajectory reach $M$;\\
b) If $p\in M^-$, then the trajectory of $Z$ through $p$ is trajectory of $Y$ through $p$, until the trajectory reach $M$;\\
c) If $p\in M_1$, then $p$ connect two regular trajectories of $X$ and $Y$; we define the trajectory through $p$ as the concatenation of these trajectories;\\
d) If $p\in M_2\cup M_3$ then the trajectory of $p$ is the trajectory of the sliding vector field associated to $X$ and $Y$, that is defined as the vector field
\bq\label{sliding}
S(X,Y)({\bx})=\dfrac{1}{Yh({\bx})-Xh({\bx})}\left( Yh({\bx})X({\bx})-Xh({\bx})Y({\bx}) \right).
\eq
We remark that the manifold $M$ is invariant by the flow of $S(X,Y)$.\\
e) If $p$ is a tangency point ($p\in \partial M_2\cup\partial M_3$) and there are trajectories of the sliding vector field starting and ending at $p$, then the trajectory through $p$ is just the concatenation of these trajectories (adapted from Definition 2.5 of \cite{2}).

So if the closure of the concatenation between a trajectory of the sliding vector and a trajectory of the escape vector field is transversal do the curve of tangencies, the closure itself is a trajectory.

For $p$ in the boundary of the regions $M_1,M_2,M_3$, the definition of its trajectory is much more complicated (or even undefined yet), and is outside our scopus. We recommend \cite{2} for these definitions.

\section{Inelastic systems}

In this section we establish some general results on inelastic vector fields. 

Let $h:\rn{3},0\rightarrow\rn{},0$ be a function with $0$ as regular value and consider the manifold $M=h^{-1}(\{0\})$. Two vector fields $X,Y$ are inelastic over $M$ if \bq\label{inelastic}(Xh)(\bx)=-(Yh)(\bx)\eq for all $x\in M$.

Inelastic vector fields appears naturally in mechanic models \cite{11} and in reversible non-smooth models. A planar non-smooth vector field $Z=(X,Y)$ with discontinuity set $\Sigma\subset\rn{2}$ is said to be reversible with respect to an involution $\varphi:\rn{2}\rightarrow\rn{2}$ with $Fix(\varphi)\subset\Sigma$, where $Fix(\varphi)=\{\bz\in\rn{2}: \, \varphi(\bz)=\bz\}$ if $\varphi X=-Y\varphi$.  If we take $\varphi,\Sigma$ such that $\Sigma=\{(x,y): \, x=0\}=Fix(\varphi)$ and the vector fields $X,Y$ are even in the second variable, then the pair $(X,Y)$ is inelastic over $\Sigma$.

For $\bx=(x_1,x_2,x_3)$, we consider $\sigma_1(\bx)=x_1^2+x_2^2+x_3^2-1$ and $\sigma_2(\bx)=(x_1^2+x_2^2+x_3^2+3)^2-16(x_1^2+x_2^2)$. Denote $\Sigma_j=\sigma_j^{-1}(\{0\})$, $\Sigma_j^-=\sigma_j^{-1}((-\infty,0))$  and $\Sigma_j^+=\sigma_j^{-1}((0,+\infty))$. We note that $\Sigma_1$ is a sphere and $\Sigma_2$ is a torus.

Now we give a characterization of linear $\Sigma_j$-inelastic vector fields.

\begin{lema}[Linear case]\label{lema1}Let $X,Y$ be linear vector fields in $\rn{3}$. Consider $X(\bx)=A\bx$ and $Y(\bx)=B\bx$, for $A=(a_{i,j})$ and $B=(b_{i,j})$ matrices.\\
a) If $X,Y$ are inelastic over the sphere $\Sigma_1$, then 
\[B=\left( \begin {array}{ccc} -a_{{1,1}}&-a_{{2,1}}-b_{{2,1}}-a_{{1,2}}&
-a_{{1,3}}-a_{{3,1}}-b_{{3,1}}\\ \noalign{\medskip}b_{{2,1}}&-a_{{2,2}
}&-a_{{3,2}}-b_{{3,2}}-a_{{2,3}}\\ \noalign{\medskip}b_{{3,1}}&b_{{3,2
}}&-a_{{3,3}}\end {array} \right) \]
b) If $X,Y$ are inelastic over the torus $\Sigma_2$, then
\[
B= \left( \begin {array}{ccc} -a_{{1,1}}&-a_{{1,2}}-a_{{2,1}}-b_{{2,1}}&
-a_{{1,3}}\\ \noalign{\medskip}b_{{2,1}}&-a_{{2,2}}&-a_{{2,3}}
\\ \noalign{\medskip}-a_{{3,1}}&-a_{{3,2}}&-a_{{3,3}}\end {array}
 \right)
\]

\end{lema}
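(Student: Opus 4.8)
The plan is to collapse the inelasticity condition \eqref{inelastic} into a single polynomial identity in the sum matrix $C=A+B$, and then read off the admissible $C$ from the geometry of $\Sigma_j$. Writing $X(\bx)=A\bx$, $Y(\bx)=B\bx$ and recalling $X\sigma_j(\bx)=(A\bx)\cdot\nabla\sigma_j(\bx)$, condition \eqref{inelastic} becomes
\[
(X\sigma_j+Y\sigma_j)(\bx)=\big((A+B)\bx\big)\cdot\nabla\sigma_j(\bx)=0\qquad\text{for all }\bx\in\Sigma_j .
\]
Thus everything is governed by the polynomial $p_j(\bx):=(C\bx)\cdot\nabla\sigma_j(\bx)$, with $C=A+B$, and the lemma reduces to determining which $C$ make $p_j$ vanish on $\Sigma_j$; once $C$ is pinned down, $B=C-A$ produces the stated matrices. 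Geometrically $p_j\equiv0$ on $\Sigma_j$ says the linear field $C\bx$ is tangent to $\Sigma_j$, so I expect $C$ to be forced into the Lie algebra of the rotations preserving $\Sigma_j$: every skew-symmetric matrix for the sphere, but only the infinitesimal rotation about the $x_3$-axis for the torus.

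For part (a) this is immediate. Since $\nabla\sigma_1=2\bx$, one gets $p_1(\bx)=2\,\bx^{\top}C\bx$, a homogeneous quadratic form. A homogeneous form vanishing on the unit sphere vanishes identically (scale any $\bx\neq0$ onto the sphere), so $\bx^{\top}C\bx\equiv0$, which holds if and only if the symmetric part of $C=A+B$ is zero, i.e. $A+B$ is skew-symmetric. The strictly lower-triangular entries $b_{21},b_{31},b_{32}$ then remain free, and the relations $c_{ii}=0$, $c_{ij}=-c_{ji}$ solve to exactly the matrix displayed in (a).

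For part (b) I first compute $\nabla\sigma_2=\big(4x_1(|\bx|^2-5),\,4x_2(|\bx|^2-5),\,4x_3(|\bx|^2+3)\big)$, whence
\[
p_2(\bx)=4(|\bx|^2-5)\big(x_1(C\bx)_1+x_2(C\bx)_2\big)+4(|\bx|^2+3)\,x_3(C\bx)_3 ,
\]
a polynomial of degree $4$ with zero constant term. The key step is to upgrade ``$p_2=0$ on $\Sigma_2$'' to ``$p_2\equiv0$'': since $\sigma_2$ is irreducible and $\Sigma_2$ is a genuine $2$-surface (its real points are Zariski dense in the complex zero set of $\sigma_2$), any polynomial vanishing on $\Sigma_2$ is a multiple $\lambda\sigma_2$; comparing degrees forces $\lambda$ to be constant, and comparing constant terms ($p_2$ has none, $\sigma_2$ has $9$) forces $\lambda=0$. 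With $p_2\equiv0$ as a polynomial, matching homogeneous parts finishes the argument: the degree-$4$ part equals $4|\bx|^2\,\bx^{\top}C\bx$, so $C$ is again skew-symmetric; feeding this back, the surviving degree-$2$ part is a nonzero multiple of $x_3\big(c_{13}x_1+c_{23}x_2\big)$, forcing $c_{13}=c_{23}=0$ and hence $c_{31}=c_{32}=0$. Only $c_{12}=-c_{21}$ survives, so $C$ is the infinitesimal $x_3$-rotation and $B=C-A$ is precisely the matrix in (b), with $b_{21}$ its unique free entry.

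The main obstacle is exactly this converse passage on the torus, namely justifying $p_2\equiv0$ from vanishing merely on $\Sigma_2$; the irreducibility and Zariski-density argument is the clean route. Alternatively, one may substitute the explicit parametrization $x_1=(2+\cos\phi)\cos\theta$, $x_2=(2+\cos\phi)\sin\theta$, $x_3=\sin\phi$ into $p_2$ and set every Fourier coefficient in $(\theta,\phi)$ to zero, reducing the claim to a finite and routine linear system in the entries $c_{ij}$ with the same conclusion.
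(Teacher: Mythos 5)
Your proof is correct, and it is organized quite differently from the paper's. The paper's own argument is a one-line computation: it expands the identity $(X\sigma_j+Y\sigma_j)|_{\Sigma_j}=0$ into a linear system in the $b_{i,j}$ and solves it (evidently by computer algebra), without comment on why vanishing \emph{on the surface} may be replaced by vanishing of all polynomial coefficients. You instead work with the sum matrix $C=A+B$, recognize the condition as tangency of the linear field $C\bx$ to $\Sigma_j$, and -- this is the genuine added value -- you justify the passage from ``$p_j$ vanishes on $\Sigma_j$'' to ``$p_j\equiv 0$'': trivially by homogeneity for the sphere, and for the torus by observing that a quartic vanishing on $\Sigma_2$ must be a constant multiple of the irreducible quartic $\sigma_2$, which is excluded by comparing constant terms ($0$ versus $9$). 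This closes a logical gap that the paper's proof silently steps over, and it also yields the structural statement (skew-symmetry of $A+B$, reduction to the infinitesimal rotation about the $x_3$-axis in the torus case) that makes the displayed matrices transparent rather than the output of a black-box elimination; your identification of the surviving degree-two part as $-32\,x_3(c_{13}x_1+c_{23}x_2)$ is consistent with the expression $X\sigma_2=32a_{3,1}x_1x_3+32a_{3,2}x_2x_3$ the paper obtains later under $q_4\equiv 0$. The only cost of your route is the (mild) need to know that $\sigma_2$ is irreducible and that the real torus is Zariski dense in its complex zero set; your fallback via the parametrization $x_1=(2+\cos\phi)\cos\theta$, $x_2=(2+\cos\phi)\sin\theta$, $x_3=\sin\phi$ avoids even that and reduces to the same finite linear system the paper solves.
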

\begin{proof}The equation \eqref{inelastic} with $h=\sigma_j$ is equivalent to a linear system of equations.

Solving this system for the $b_{i,j}$'s, we obtain the expression of $B$ given in the statements.
\end{proof}

\section{Main results\label{mainresults}}

For $\bx=(x_1,x_2,x_3)$, we consider $\sigma_1(\bx)=x_1^2+x_2^2+x_3^2-1$ and $\sigma_2(\bx)=(x_1^2+x_2^2+x_3^2+3)^2-16(x_1^2+x_2^2)$. Denote $\Sigma_j=\sigma_j^{-1}(\{0\})$, $\Sigma_j^-=\sigma_j^{-1}((-\infty,0))$  and $\Sigma_j^+=\sigma_j^{-1}((0,+\infty))$.

Consider $X,Y$ two linear vector fields defined over an open set $U\subset\rn{3}$ containing $\Sigma_j$, $j=1,2$. Suppose that $X,Y$ are singular at $\ze$. We can write $X(\bx)=A\bx$ and $Y(\bx)=B\bx$, where $A=(a_{i,j})_{3\times 3}$ and $B=(b_{i,j})_{3\times 3}$.

Fix $j=1$ or $j=2$. Suppose that $(X,Y)$ is a $\Sigma_j$-inelastic pair, that is $\left.(X\sigma_j+Y\sigma_j)\right|_{\Sigma_j}=0$, and consider $Z_j=(X,Y)$ the non-smooth vector field
\[Z_j(\bx)=\left\{
\begin{array}{lcl}
X(\bx)&,& \bx\in \Sigma_j^+,\\
Y(\bx)&,& \bx\in \Sigma_j^-.
\end{array}
\right.
\]

Consider the sliding vector field $S_j$ defined over $\Sigma_j^*$
\bq\label{Sj}S_j(\bx)=\dfrac{1}{(Y\sigma_j)(\bx)+(X\sigma_j)(\bx)} \left((Y\sigma_j)(\bx)X(\bx)-(X\sigma_j)(\bx)Y(\bx)\right), \ \ \bx\in\Sigma_j^*,\eq
where $\Sigma_j^*=\{\bx\in\Sigma_j;\, Y\sigma_j(\bx)X\sigma_j(\bx)\neq 0\}$.

\bigskip

\noindent {\bf Theorem A:} If $X,Y$ are linear and inelastic over $\Sigma_1$ (sphere), then the sliding vector field $S_1$ defined on the sphere (if non trivial) has two equilibrium points $p_\pm$, and every other trajectory is closed and equal to the intersection between $\Sigma_1$ and a plane orthogonal to the line $\overline{p_+p_-}$ joining the two equlibria.

\bigskip

\noindent {\bf Theorem B:} If $X,Y$ are linear and inelastic over $\Sigma_2$ (torus) and $X\sigma_2(\bx)$ is quadratic, then the sliding vector field $S_2$ can be defined on $S_2$ except on two circles, that consists of curves of singular points. For all other points in $\Sigma_2$, if non trivial, the trajectory of $S_2$ is closed and planar.

\section{Proof of Theorem A\label{pta}}

Before proving Theorem A, we we derive the equations for the tangency points and the expression of the sliding vector field. We fix $\sigma_1(\bx)=x_1^2+x_2^2+x_3^2-1$, so that $\Sigma_1=\sigma_1^{-1}(\{0\}$ is the sphere.

Let $X(\bx)=A\bx$ and $Y(\bx)=B\bx$ be two linear vector fields, inelastic over $\Sigma_1=\sigma_1^{-1}(\{0\})$. Note that $B$ is given by Lemma \ref{lema1}.

Then
\bq\label{xh-esfera}
\begin{array}{lcl}
X\sigma_1(\bx)&=&2a_{1,1}x_1^{2}+ 2a_{2,2}x_2^{2}+2a_{3,3}x_3^{2}\\
&+&( 2a_{1,2}+2a_{2,1} ) x_1x_2+ ( 2a_{1,3}+2a_{3,1} ) x_1x_3\\
&+&( 2a_{2,3}+2a_{3,2} ) x_2x_3\\
\end{array}
\eq

According to the classification of quadratic surfaces, due to [Beyer 1987], the set $\{\bx\in\rn{3};\, X\sigma_1(\bx)=0\}$ is or the empty set, or a plane, or the union of two intersection planes, or an elliptic cone (the union, by the vertex, of two cones along the same axis). So, when not empty, the intersection of $\{\bx\in\rn{3};\, X\sigma_1(\bx)=0\}$ with the sphere can be or a circle, or two disjoint circles, or two circles intersecting transversally.

Note that if we define $Q=A+A^t$, that is,
\[Q=
\left(
\begin{array}{ccc}
2a_{1,1}&a_{1,2}+a_{2,1}&a_{1,3}+a_{3,1}\\
a_{1,2}+a_{2,1}&2a_{2,2}&a_{2,3}+a_{3,2}\\
a_{1,3}+a_{3,1}&a_{2,3}+a_{3,2}&2a_{3,3}
\end{array}
\right)\]
 then $X\sigma_1(\bx)=\bx^t Q \bx$, so if $Q$ is negative definite, then $Xh$ is the negative of a quadratic form, and $X\sigma_1(\bx)<0$ for all $\bx$. In this case, there are no tangencies. 

\begin{lema}\label{lema-ND}If $A$ is negative definite, then $X\sigma_1(\bx)<0$ and $X\sigma_1(\bx)>0$, $\forall \bx\in S^2$. So there are no tangency points over the sphere.
\end{lema}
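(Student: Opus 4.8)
The plan is to reduce the statement to the elementary fact that a negative definite quadratic form is strictly negative off the origin, and then to read off the sign of $Y\sigma_1$ from the inelastic constraint. The computation \eqref{xh-esfera} preceding the lemma already gives $X\sigma_1(\bx)=\bx^t Q\bx$ with $Q=A+A^t$, so all the geometric content is encoded in the symmetric matrix $Q$.

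First I would record the (standard) observation that ``$A$ negative definite'' is to be read through its symmetric part: since $\bx^t A\bx$ is a scalar it equals its own transpose $\bx^t A^t\bx$, hence $\bx^t A\bx=\tfrac12\bx^t(A+A^t)\bx=\tfrac12\bx^t Q\bx$. Thus $A$ is negative definite if and only if $Q$ is, and in either case $\bx^t Q\bx<0$ for every $\bx\neq\ze$. Because every point of the sphere $S^2=\Sigma_1$ is nonzero, this yields $X\sigma_1(\bx)=\bx^t Q\bx<0$ for all $\bx\in S^2$.

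Next I would invoke inelasticity. By the defining relation \eqref{inelastic} with $h=\sigma_1$ we have $(X\sigma_1)(\bx)+(Y\sigma_1)(\bx)=0$ on $\Sigma_1$, so $Y\sigma_1(\bx)=-X\sigma_1(\bx)>0$ for all $\bx\in S^2$. This is the intended reading of the two inequalities in the statement: the first is the sign of $X\sigma_1$, the second the sign of its inelastic partner $Y\sigma_1$. A tangency point is by definition a zero of $X\sigma_1$ or of $Y\sigma_1$; since neither vanishes anywhere on $S^2$, there are no tangency points. In fact the strict chain $X\sigma_1<0<Y\sigma_1$ places the entire sphere in the sliding region $M_3$, so $\Sigma_1^*=\Sigma_1$ and $S_1$ is defined on all of $\Sigma_1$.

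There is essentially no obstacle here: the only point that requires a word of care is the convention that ``negative definite'' for the possibly non-symmetric matrix $A$ refers to the associated quadratic form $\bx^t A\bx$, equivalently to the symmetric part $Q/2$. Once that convention is fixed, the conclusion is immediate from \eqref{xh-esfera} and \eqref{inelastic}.
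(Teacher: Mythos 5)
Your proof is correct and follows essentially the same route as the paper: both reduce the lemma to the identity $X\sigma_1(\bx)=\bx^t Q\bx$ with $Q=A+A^t$ and to the equivalence between negative definiteness of $A$ and of $Q$ (the paper cites a reference for this equivalence, whereas you give the one-line argument $\bx^tA\bx=\tfrac12\bx^tQ\bx$). Your explicit derivation of $Y\sigma_1=-X\sigma_1>0$ from the inelastic relation \eqref{inelastic} is a welcome addition, since it clarifies the evident typo in the statement (the second inequality should read $Y\sigma_1(\bx)>0$), which the paper's own proof leaves unaddressed.
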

\begin{proof}We know by the discussion of the last paragraph that if $Q=A+A^t$ is positive definite, then there are no tangencies.  In \cite{12} is shown that a matrix $L$ is positive (negative) definite if, and only if, $L+L^t$ is positive (negative) definite. This complete the proof.
\end{proof}

\begin{lema}\label{lema-ndm}Suppose that $A$ is a negative definite matrix. Then:\\
a) The sphere is a sliding region, and the sliding vector field $S_2$ defined over the sphere is given by
\[
\begin{array}{lcl}
S_2(\bx)&=&\left({ -\dfrac{a_{2,1}+b_{2,1}}2y-\dfrac{a_{3,1}+b_{3,1}}2 z} \right)\dfrac{\partial}{\partial x_1}\\
&+&\left({ \dfrac{a_{2,1}+b_{2,1}}2x-\dfrac{a_{3,2}+b_{3,2}}2z} \right)\dfrac{\partial}{\partial x_2}\\
&+&\left({ \dfrac{a_{3,1}+b_{3,1}}2x+\dfrac{a_{3,2}+b_{3,2}}2y} \right)\dfrac{\partial}{\partial x_3}\\
\end{array}
\]
\noindent b) The sliding vector field $S_2$ has two symmetric equilibrium points,
\[p_{\pm}=\pm \dfrac{1}{\rho} (  a_{3,2}+b_{3,2},-(a_{3,1}+b_{3,1}),a_{2,1}+b_{2,1}  ),\]
where $\rho=
b_{2,1}^{2}+a_{2,1}^{2}+a_{3,2}^{2}+a_{3,1}^{2}+b_{3,1}^{2}+b_{3,2}^{2}+2b_{2,1}a_{2,1}+ 2b_{3,2}a_{3,2}+2a_{3,1}b_{3,1}$, and all other trajectories of $S_2$ are closed (and contained in the planes orthogonal to $\overline{p_-p_+}$ that intersects the sphere).
\end{lema}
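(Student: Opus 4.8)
The plan is to exploit the inelastic relation to collapse the Filippov expression to an elementary form and then recognize the resulting linear field as an infinitesimal rotation, whose phase portrait on the sphere is immediate.

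First I would record that the whole sphere is a sliding region, which also shows $S_1$ is everywhere defined. By Lemma \ref{lema-ND}, negative definiteness of $A$ forces $X\sigma_1(\bx)<0$ for every $\bx\in\Sigma_1$; the inelastic identity \eqref{inelastic} then gives $Y\sigma_1(\bx)=-X\sigma_1(\bx)>0$, so $X\sigma_1<0<Y\sigma_1$ on all of $\Sigma_1$ and hence $\Sigma_1=M_3$. Next I would compute the field. Substituting $Y\sigma_1=-X\sigma_1$ into the Filippov formula \eqref{sliding}, the numerator becomes $-X\sigma_1\,(X+Y)$ and the denominator becomes $-2\,X\sigma_1$, so the common factor $X\sigma_1$ cancels and one is left with the half-sum $S_1(\bx)=\tfrac12\bigl(X(\bx)+Y(\bx)\bigr)=\tfrac12(A+B)\bx$.

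Two observations then finish part (a). On one hand, inserting the explicit $B$ from Lemma \ref{lema1}(a) into $A+B$ annihilates the diagonal and reproduces exactly the field displayed in the statement, by routine matrix addition. On the other hand, it is conceptually cleaner to note that inelasticity over the unit sphere says $\bx^{t}(A+A^{t})\bx+\bx^{t}(B+B^{t})\bx=0$ on $\Sigma_1$; since a homogeneous degree-two form vanishing on the sphere vanishes identically, the symmetric part of $A+B$ is zero, i.e. $A+B$ is antisymmetric. Consequently there is an axis vector $v=(a_{3,2}+b_{3,2},\,-(a_{3,1}+b_{3,1}),\,a_{2,1}+b_{2,1})$ with $(A+B)\bx=v\times\bx$, whence $S_1(\bx)=\tfrac12\,v\times\bx$.

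For part (b) I would simply read off the dynamics of this rotation field. The equilibria are the zeros of $v\times\bx$ on $\Sigma_1$, namely the two points where the axis $\mathbb{R}v$ meets the sphere; normalizing $v$ yields the stated $p_\pm$, with $\rho=(a_{2,1}+b_{2,1})^2+(a_{3,1}+b_{3,1})^2+(a_{3,2}+b_{3,2})^2=\|v\|^2$ arising as the normalization constant. For the remaining orbits, the two identities $\tfrac{d}{dt}\|\bx\|^2=\bx\cdot(v\times\bx)=0$ and $\tfrac{d}{dt}(v\cdot\bx)=\tfrac12\,v\cdot(v\times\bx)=0$ reconfirm invariance of $\Sigma_1$ and show that $v\cdot\bx$ is constant along orbits. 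Hence each non-polar orbit lies in $\{v\cdot\bx=c\}\cap\Sigma_1$, a circle cut by a plane orthogonal to $\overline{p_-p_+}$; as $S_1$ has no zero on such a circle, the orbit is a nonvanishing flow on a compact one-manifold and therefore coincides with the entire circle and is periodic, giving the asserted closed planar trajectories. The computations are routine once the reduction $S_1=\tfrac12(A+B)\bx$ is in place, so I anticipate no serious obstacle; the single step deserving care is the cancellation in \eqref{sliding}, which is legitimate precisely because $X\sigma_1\neq0$ throughout the sliding sphere, and I would likewise verify that the normalization of $v$ reproduces the stated $\rho$ rather than take it for granted.
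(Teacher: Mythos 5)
Your proposal is correct and follows essentially the same route as the paper: compute the sliding field, observe that it is the linear field of a skew-symmetric matrix (an infinitesimal rotation), and read off the two polar equilibria and the closed circular orbits in planes orthogonal to the axis. Your version is slightly more careful in a few spots the paper glosses over --- you justify why the whole sphere is sliding, you derive $S_1=\tfrac12(X+Y)$ by cancellation in \eqref{sliding} rather than by brute-force computation, your cross-product representation avoids the paper's restriction $a_{2,1}+b_{2,1}\neq 0$ in the choice of the normal vector $\nu$, and your two-first-integrals-plus-compactness argument actually closes the periodicity claim --- but none of this changes the underlying strategy (note only that, as you implicitly observe, landing exactly on the unit sphere requires dividing $v$ by $\sqrt{\rho}=\|v\|$ rather than by the $\rho=\|v\|^2$ appearing in the statement).
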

\begin{proof}We compute $S_2$ using formula \eqref{Sj}, and obtain a linear vector field $S_2(\bx)=[S_2]\bx$, where
\[[S_2]=\dfrac12 \left(
\begin {array}{ccc}
0&- a_{{2,1}}-b_{{2,1}}&- b_{{3,1}}- a_{{3,1}}\\
b_{{2,1}}+a_{{2,1}}&0&- a_{{3,2}}-b_{{3,2}}\\
a_{{3,1}}+ b_{{3,1}}&b_{{3,2}}+a_{{3,2}}&0
\end {array}
\right)\]
As the matrix $[S_2]$ is skew-symmetric, it is clear that zero is an eigenvalue for $[S_2]$, so there is a line of solution to $S_2(\bx)=0$, that pass through the origin and intercepts the sphere exactly in the points $p_\pm$. The other eigenvalues are a pure imaginary number and its complex conjugate. If we consider \bq\label{nu}\nu=\left(  \dfrac{a_{3,2}+b_{3,2}}{a_{2,1}+b_{2,1}},-\dfrac{a_{3,1}+b_{3,1}}{a_{2,1}+b_{2,1}},1  \right),\eq
for $a_{2,1}+b_{2,1}\neq 0$, then $S_2(\bx) \perp \nu$, for all $\bx$, so the trajectories of $S_2$ are closed (and planar).
\end{proof}

Now we back to equation \eqref{xh-esfera} and discuss the case where $Q$ (or $A$, by \cite{12}) is not negative definite.

In this case, following the classification of quadratic surfaces given in \cite{13}, we have that the solutions over the sphere of the equation $X\sigma_1(\bx)=0$ describes (see also Figure \ref{fig1}):\\
\noindent i) a circle, if the rank of $Q$ is one,\\
\noindent ii) two disjoint circles, if the rank of $Q$ is two,\\
\noindent iii) two intersecting (transversal) circles, if the rank of $Q$ is three.\\
\begin{figure}[h!]
\includegraphics[width=10cm]{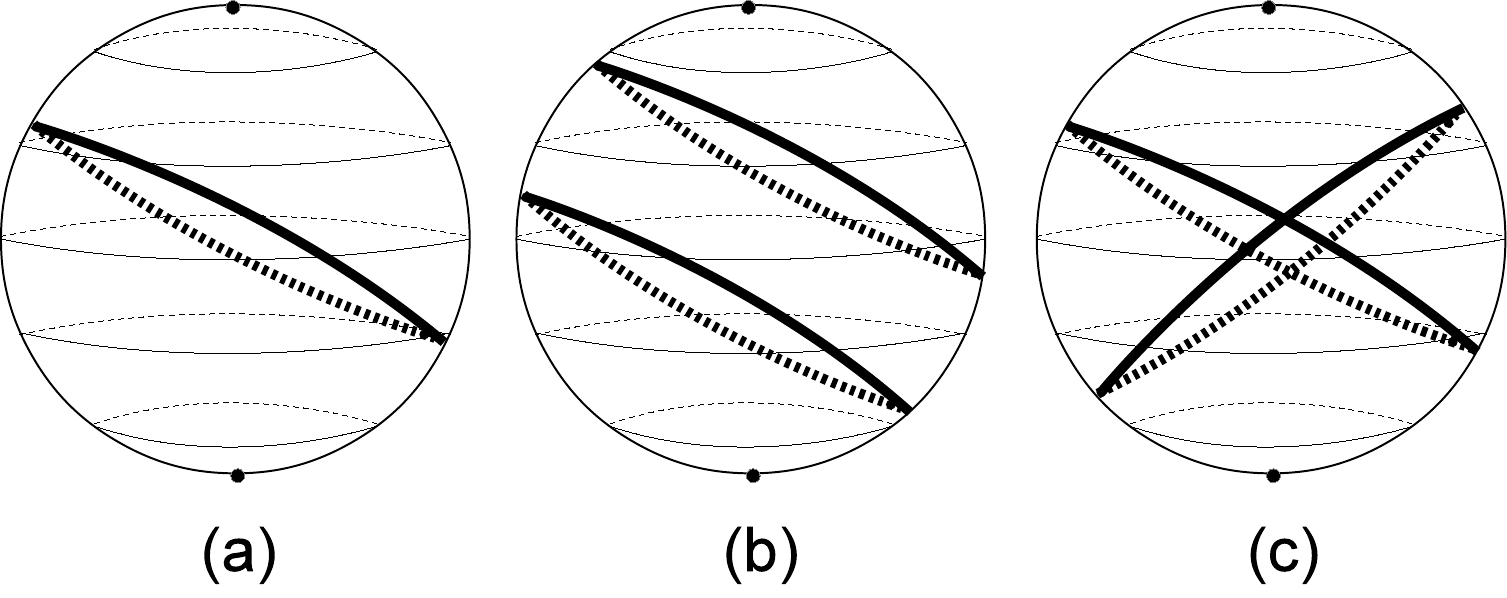}
\caption{Situation where (a) ${\rm rank}(Q)=1$, (b) ${\rm rank}(Q)=2$, (c) ${\rm rank}(Q)=3$. The bold lines are tangency circles.}
\label{fig1}
\end{figure}

Then a trajectory of the sliding vector field intersects the tangency set in at most 4 points, provided that this trajectory is not a tangency line. In case of finite intersection, recall that from the discussion in the end of Section \ref{nsvf} that we can also define the sliding vector field over the entire sphere.

Now we show that these circles of tangencies are never a trajectory of the sliding vector field $S_2$.

\begin{lema}\label{l4}Let $\gamma$ be a non trivial trajectory of $S_2$ over the sphere. Then $\gamma$ intercepts the set $\{\bx\in S^2;\, X\sigma_1(\bx)=0\}$ in at most 4 points.
\end{lema}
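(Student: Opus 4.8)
The plan is to make the trajectory $\gamma$ explicit and to reduce the counting of intersection points to the number of zeros of a trigonometric polynomial of degree two. First I would recall, from the matrix computation carried out in the proof of Lemma~\ref{lema-ndm}, that $[S_2]=\tfrac12(A+B)$ is skew-symmetric; that computation only uses the inelastic form of $B$ from Lemma~\ref{lema1} and is therefore valid whether or not $A$ is negative definite. Hence $[S_2]$ has eigenvalues $0,\pm i\omega$, its flow $e^{t[S_2]}$ is a rotation of angle $\omega t$ about the axis $\overline{p_-p_+}$ with direction $\nu$ from \eqref{nu}, and every non-trivial trajectory is a circle
\[
\gamma(t)=\bx_\parallel+\rho\bigl(\cos(\omega t)\,\mathbf{u}+\sin(\omega t)\,\mathbf{v}\bigr),
\]
where $\bx_\parallel$ is the (constant) component of $\gamma$ along $\nu$, the vectors $\mathbf{u},\mathbf{v}$ are orthonormal and span the plane $P$ orthogonal to $\nu$ containing $\gamma$, and $|\bx_\parallel|^2+\rho^2=1$.

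Next I would substitute this parametrization into $X\sigma_1(\bx)=\bx^{t}Q\bx$, with $Q=A+A^{t}$. Since each coordinate of $\gamma(t)$ is affine in $\cos(\omega t)$ and $\sin(\omega t)$, the function $g(t):=X\sigma_1(\gamma(t))$ is quadratic in $\cos(\omega t),\sin(\omega t)$; linearizing the products $\cos^2,\sin^2,\cos\sin$ shows that
\[
g(t)=d_0+d_1\cos(\omega t)+d_2\sin(\omega t)+d_3\cos(2\omega t)+d_4\sin(2\omega t),
\]
a real trigonometric polynomial of degree at most two. A non-zero trigonometric polynomial of degree $n$ has at most $2n$ zeros in one period, so $g$ has at most four zeros in $[0,2\pi/\omega)$; as these zeros are exactly the points of $\gamma\cap\{X\sigma_1=0\}$, this yields the bound. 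An equivalent and perhaps cleaner packaging replaces the trigonometric count by B\'ezout's theorem: inside the plane $P$ both $\gamma$ and $\{X\sigma_1=0\}\cap P$ are conics, so they meet in at most four points unless they share a component.

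The main obstacle, in either formulation, is excluding the degenerate case $g\equiv 0$ (equivalently, $\gamma$ and the conic $\{X\sigma_1=0\}\cap P$ sharing a component), for then $\gamma$ would meet the tangency set in infinitely many points. The point to establish is that $g\equiv 0$ forces $\gamma$ to be one of the tangency circles $\{X\sigma_1=0\}\cap\Sigma_1$, and such a circle is the common boundary between the escape and the sliding region and therefore is not itself a non-trivial sliding trajectory in the Filippov convention adopted in Section~\ref{nsvf}; thus the hypothesis on $\gamma$ excludes it. Concretely, vanishing of the five coefficients $d_0,\dots,d_4$ (with $\rho\neq 0$) forces $Q$ to restrict to a scalar multiple of the identity on $P$ and places the corresponding tangency circles in planes orthogonal to $\nu$, i.e. exactly on the latitude circles that bound, but do not belong to, the open sliding and escape regions. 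Hence no interior sliding trajectory is annihilated by $g$, $g\not\equiv 0$, and $\gamma$ meets $\{X\sigma_1=0\}$ in at most four points.
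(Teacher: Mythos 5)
Your proof is correct, and it reaches the bound by a genuinely different computational route than the paper. The paper likewise begins from the skew-symmetry of $[S_2]$, so that every non-trivial orbit is a circle in a plane orthogonal to $\nu$; but it then intersects the quadric cone $\{X\sigma_1=0\}$ with the single plane $\langle \bx,\nu\rangle=0$ through the origin, solves explicitly for the resulting (at most two) lines $\gamma_{1,2}$ via the roots $\xi_{1,2}$ of the auxiliary quadratic $\Xi(s)$ recorded in the Appendix, and counts the at most four points where these lines meet the sphere. You instead work in the plane of the trajectory itself: parametrizing the latitude circle and substituting into the quadratic form $\bx^{t}Q\bx$ yields a trigonometric polynomial of degree two (equivalently, a B\'ezout count for two coplanar conics), hence at most four zeros unless it vanishes identically. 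The two arguments buy different things. The paper's computation produces explicit formulas for the tangency directions, but as written it is carried out only in the central plane $\langle\bx,\nu\rangle=0$, whereas a generic trajectory lies in a parallel plane $\langle\bx,\nu\rangle=c$ with $c\neq 0$; your version applies uniformly to every such plane and is, if anything, more complete. You also isolate and treat the one degenerate case ($g\equiv 0$, i.e.\ $\gamma$ contained in a tangency circle), which the paper only asserts in passing (``the circles of tangencies never are a trajectory''). One caveat: your resolution of that case --- such a circle lies on the common boundary of the sliding and escape regions and is therefore excluded by the hypothesis that $\gamma$ is a non-trivial sliding trajectory --- suffices for the lemma as stated, but it is logically weaker than the paper's claim that tangency circles are never orbits of the extended linear field $\bx\mapsto[S_2]\bx$, and it is that stronger statement which underlies the subsequent Corollary extending $S_2$ to the whole sphere; your coefficient analysis ($Q$ restricting to a multiple of the identity on $P$) is exactly the right tool to upgrade to that stronger conclusion if needed.
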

\begin{proof}The curve $\gamma$ is transversal to $\nu$ (given by \eqref{nu}). There are two options for the solutions of $X\sigma_1(\bx)=0,\, \langle \bx,\nu\rangle=0$: the empty set, or one line passing through the origin, or the union of two lines passing through the origin (if $a_{3,2}+b_{3,2}\neq 0$), parametrized by
\[
\gamma_{1,2}(t)=\left(  \dfrac{\xi_{1,2}b_{3,1}+\xi_{1,2} a_{3,1}-a_{2,1}-b_{2,1}}{a_{3,2}+b_{3,2}}\,t,\xi_{1,2} t,t     \right),
\]
where $\xi_{1},\xi_2$ are the roots of a two degree polynomial $\Xi(s)=0$ (given in the Appendix). If $\xi_1=\xi_2$ then we have just one line of solutions. The lines $\gamma_{1,2}$ intercepts the sphere in none, 2 or 4 points. In particular, the circles of tangencies never are a trajectory of the sliding vector field.
\end{proof}

So we can consider the (closure of the) sliding vector field on the whole sphere.

\begin{corolario}Lemma \ref{lema-ndm} is true even if $A$ is not negative definite.
\end{corolario}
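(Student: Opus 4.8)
The plan is to notice that negative-definiteness of $A$ enters Lemma \ref{lema-ndm} for a single reason: through Lemma \ref{lema-ND} it guarantees that $X\sigma_1$ and $Y\sigma_1$ vanish nowhere on the sphere, so that the Filippov quotient \eqref{Sj} is defined at every point. The skew-symmetric matrix $[S_2]$ exhibited in the proof of Lemma \ref{lema-ndm}, though, is produced by a purely algebraic cancellation: on $\Sigma_1$ the inelastic relation gives $X\sigma_1=-Y\sigma_1$, so the numerator of \eqref{Sj} factors as $Y\sigma_1\,(X+Y)$ while the denominator is $2\,Y\sigma_1$, and the common factor $Y\sigma_1$ divides out to leave the \emph{linear} field $S_2(\bx)=[S_2]\bx$. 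This computation uses only that $X,Y$ are linear and inelastic; definiteness of $A$ plays no role. Hence exactly the same linear field $[S_2]\bx$ is obtained whether or not $A$ is negative definite.

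First I would record that, being a polynomial vector field, $[S_2]\bx$ is defined and smooth on all of $\rn{3}$, in particular across the tangency circles $\{\bx\in S^2:\, X\sigma_1(\bx)=0\}$ where the unsimplified quotient \eqref{Sj} is the indeterminate form $0/0$. I may therefore simply take $[S_2]\bx$ as the (closure of the) sliding field on the entire sphere; note it is tangent to $S^2$ everywhere, since skew-symmetry of $[S_2]$ forces $\bx^{t}[S_2]\bx=0$. The eigenvalue argument of Lemma \ref{lema-ndm}(b) then carries over verbatim and invokes no hypothesis on $A$: zero is an eigenvalue of $[S_2]$, furnishing the axis through the origin meeting $S^2$ at $p_\pm$; the remaining eigenvalues are purely imaginary; and $[S_2]\bx\perp\nu$ for the vector $\nu$ of \eqref{nu}. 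Thus every nontrivial trajectory is again the circle cut on the sphere by a plane orthogonal to $\overline{p_-p_+}$, and the equilibria $p_\pm$ are unchanged.

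The point that genuinely needs the earlier work, and which I regard as the main obstacle, is to justify that these planar circles are trajectories of the non-smooth field $Z_1$ and not merely integral curves of a formally extended field. When $A$ is not negative definite the sphere decomposes into sliding and escape regions separated by the tangency circles, so along such a circle the motion alternates between sliding arcs and escape arcs. Here I would invoke Lemma \ref{l4}: a nontrivial trajectory meets the tangency set in at most four points, each transversally, and the tangency circles are themselves never trajectories. Consequently each planar circle is a finite concatenation of sliding and escape arcs joined at these transversal tangency points, and by the concatenation rule stated at the end of Section \ref{nsvf} its closure is a bona fide trajectory of $Z_1$. Once this is in place, every dynamical assertion of Lemma \ref{lema-ndm}, namely the explicit expression for $S_2$, the two symmetric equilibria $p_\pm$, and the closed planar trajectories, holds with the hypothesis on $A$ removed, which is the corollary.
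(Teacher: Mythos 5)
Your proposal is correct and follows essentially the same route as the paper: the paper also observes that the algebraic computation of $[S_2]$ is independent of the definiteness of $A$, and relies on Lemma \ref{l4} together with the concatenation rule of Section \ref{nsvf} to extend the sliding field (by closure) across the tangency circles to the whole sphere. Your write-up merely makes explicit the reasoning the paper leaves implicit in the sentences surrounding the corollary, including the correct reading of \eqref{Sj} with the sign convention of \eqref{sliding} so that the factor $Y\sigma_1$ cancels and $S_1=\tfrac12(X+Y)$.
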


\section{Proof of Theorem B}

We follow the same sequence of Section \ref{pta}.

Fix $\sigma_2(\bx)=(x_1^2+x_2^2+x_3^2+3)^2-16(x_1^2+x_2^2)$, so $\Sigma_2=\sigma_2^{-1}(\{0\}$ is the torus. Recall that $B$ is given by Lemma \ref{lema1}.

Then
\[
\begin{array}{lcl}
X\sigma_2(\bx)&=&q_2(\bx)+q_4(\bx)\\
\end{array}
\]
where
\[
\begin{array}{lcl}
q_2(\bx)&=&-20 a_{1,1}x_1^{2}+ \left( -20 a_{2,1}-20 a_{1,2} \right) x_2x_1-20 a_{2,2}x_2^{2}\\
&+& \left( -20 a_{1,3}+12 a_{3,1} \right) x_3x_1+ \left( 12 a_{3,2}-20 a_{2,3} \right) x_3x_2+12 a_{3,3}x_3^{2}
\end{array}
\]
and
\[
\begin{array}{lcl}
 q_4(\bx)&=&4 a_{1,1}x_1^{4}+ \left( 4 a_{2,1}+4 a_{1,2} \right) x_2x_1^{3}+ \left( 4 a_{1,1}+4 a_{2,2} \right) {x_2}^{2}x_1^{2}\\
 &+& \left( 4 a_
{2,1}+4 a_{1,2} \right) x_2^{3}x_1+4 a_{2,2}x_2^{4}+ \left( 4 a
_{1,3}+4 a_{3,1} \right) x_3x_1^{3}\\
&+& \left( 4 a_{2,3}+4 a_{3,2
} \right) x_3x_2{x_1}^{2}+ \left( 4 a_{1,3}+4 a_{3,1} \right) x_3{x_2}^{2
}x_1+ \left( 4 a_{2,3}+4 a_{3,2} \right) x_3{x_2}^{3}\\
&+& \left( 4 a_{1
,1}+4 a_{3,3} \right) {x_3}^{2}{x_1}^{2}+ \left( 4 a_{2,1}+4 a_{1
,2} \right) {x_3}^{2}x_2x_1+ \left( 4 a_{2,2}+4 a_{3,3} \right) {x_3}^{
2}{x_2}^{2}\\
&+& \left( 4 a_{1,3}+4 a_{3,1} \right) {x_3}^{3}x_1+ \left( 4
 a_{2,3}+4 a_{3,2} \right) {x_3}^{3}x_2+4 a_{3,3}{x_3}^{4}
\end{array}
\]

It is hard to determine the zeroes of $X\sigma_2$ for general functions $q_2,q_4$, So we work under the hypothesis
\[q_4(\bx)\equiv 0.\]

In this case, $X\sigma_2$ reduces to \[X\sigma_2(\bx)=q_2(\bx)=32a_{3,1}x_1x_3+32a_{3,2}x_2x_3.\]

\begin{lema}\label{l6}Under the hypothesis $q_4(\bx)\equiv 0$, the tangency set over $\Sigma_2$ is the union of four circles $C_1,C_2,C_3,C_4$, where $C_1,C_2$ are contained in the plane $x_3=0$ and $C_3,C_4$ are contained in the plane $a_{3,1}x_1+a_{3,2}x_2=0$ (see Figure \ref{toro}).
\end{lema}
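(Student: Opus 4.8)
The plan is to factor the tangency condition and intersect each factor with the torus. Under $q_4\equiv 0$ we have, as computed above, $X\sigma_2(\bx)=q_2(\bx)=32a_{3,1}x_1x_3+32a_{3,2}x_2x_3=32\,x_3\,(a_{3,1}x_1+a_{3,2}x_2)$. Hence the tangency set $\{\bx\in\Sigma_2:\,X\sigma_2(\bx)=0\}$ splits as $(\Sigma_2\cap\{x_3=0\})\cup(\Sigma_2\cap\{a_{3,1}x_1+a_{3,2}x_2=0\})$. I would first dispose of the degenerate case $(a_{3,1},a_{3,2})=(0,0)$, in which $X\sigma_2\equiv 0$ and the whole torus is tangent; this is excluded by the standing hypothesis of Theorem B that $X\sigma_2$ be genuinely quadratic, so from here on $(a_{3,1},a_{3,2})\neq(0,0)$.

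For the horizontal plane I would set $r=\sqrt{x_1^2+x_2^2}$ and substitute $x_3=0$ into $\sigma_2$, obtaining $(r^2+3)^2-16r^2=0$, i.e. $(r^2+3)^2=(4r)^2$. The two sign choices give $r^2-4r+3=0$ and $r^2+4r+3=0$; since $r\geq 0$ only the former is admissible, with roots $r=1$ and $r=3$. Thus $\Sigma_2\cap\{x_3=0\}$ is exactly the two concentric circles $C_1=\{x_1^2+x_2^2=1,\ x_3=0\}$ and $C_2=\{x_1^2+x_2^2=9,\ x_3=0\}$.

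For the vertical plane the key point is that $\{a_{3,1}x_1+a_{3,2}x_2=0\}$ contains the $x_3$-axis, the axis of revolution of $\Sigma_2$, so it is a meridian plane and should cut the torus in two meridian circles. To make this precise I would rewrite $\sigma_2=0$ in revolution form: since $x_1^2+x_2^2+x_3^2+3>0$, the equation is equivalent to $x_1^2+x_2^2+x_3^2+3=4\sqrt{x_1^2+x_2^2}$, that is $(\rho-2)^2+x_3^2=1$ with $\rho=\sqrt{x_1^2+x_2^2}$, exhibiting $\Sigma_2$ as the torus of tube radius $1$ and center radius $2$. Taking the unit vector $w=(-a_{3,2},a_{3,1})/\sqrt{a_{3,1}^2+a_{3,2}^2}$ spanning the plane inside $\{x_3=0\}$ and the signed coordinate $u$ with $(x_1,x_2)=u\,w$, one has $\rho=|u|$ on the plane, so the intersection is $\{(u-2)^2+x_3^2=1\}\cup\{(u+2)^2+x_3^2=1\}$, the two disjoint unit circles $C_3,C_4$ centered at $u=\pm 2$.

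The main obstacle is the bookkeeping in this last step: the substitution $\rho=|u|$ forces a sign split, and one must verify that the branch $u\geq 0$ (resp. $u\leq 0$) yields a complete circle confined to $u\in[1,3]$ (resp. $u\in[-3,-1]$), so that $C_3$ and $C_4$ neither meet the axis $u=0$ nor merge; the radius-$1$, center-$2$ normalization of $\Sigma_2$ is exactly what guarantees this separation. Collecting the four circles gives the claimed description.
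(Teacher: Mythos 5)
Your proposal is correct and follows exactly the route the paper intends: its one-line proof says to determine the solutions of $q_2(\bx)=0$ and intersect with the torus, and you carry this out by factoring $q_2(\bx)=32\,x_3\,(a_{3,1}x_1+a_{3,2}x_2)$ and computing the two plane sections. Your added details (ruling out $(a_{3,1},a_{3,2})=(0,0)$ via the quadraticity hypothesis of Theorem B, and the revolution form $(\rho-2)^2+x_3^2=1$ showing the meridian circles stay in $1\leq|u|\leq 3$) are accurate and fill in precisely what the paper omits.
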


\begin{figure}[h!]
\includegraphics[width=6cm]{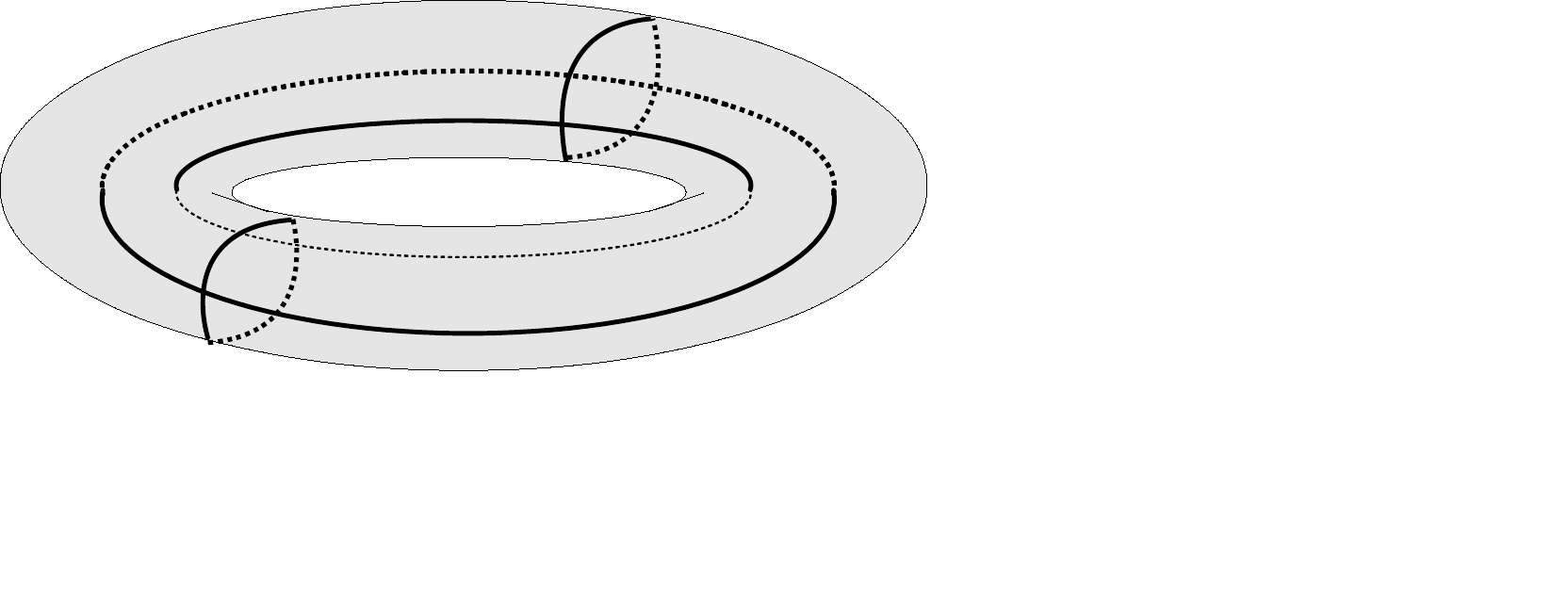}
\caption{Tangency lines over the torus.}
\label{toro}
\end{figure}

\begin{proof}Just determine the solutions of $q_2(\bx)=0$ and study its intersections with the torus $\Sigma_2$.
\end{proof}
Note that we have four intersection points between the circles in the statement of Lemma \ref{l6}, and these curves break the torus $\Sigma_2$ into four regions $R_1,R_2,R_3,R_4$, considering $R_1,R_2$ contained in the region $x_3>0$, and $R_3,R_4$ in the region $x_3<0$.

The expression of the sliding vector field is
\[S_2(\bx)=-\dfrac{1}{2}(a_{2,1}+b_{2,1})x_2\dfrac{\partial}{\partial x_1}+\dfrac12(a_{2,1}+b_{2,1})x_1\dfrac{\partial}{\partial x_1},\]
for $\bx\in R_1\cup R_2\cup R_3\cup R_4$.

The proof of the next two lemmas follow the same techniques of Lemmas \ref{lema-ndm} and \ref{l4}, and will be omitted. They prove Theorem A.

\begin{lema}The trajectories of the sliding vector field $S_2$ are transversal to the tangency circles $C_3,C_4$. So a trajectory of the sliding vector field can pass over these circles (crossing regions $R_1,R_2$ and $R_3,R_4$).
\end{lema}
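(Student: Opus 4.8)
The plan is to exploit the very simple form of $S_2$ displayed just above. First I would record that, up to the scalar $\tfrac12(a_{2,1}+b_{2,1})$, the field $S_2$ is the infinitesimal rotation about the $x_3$-axis, namely $-x_2\,\partial/\partial x_1+x_1\,\partial/\partial x_2$. Hence its flow leaves both $x_3$ and $x_1^2+x_2^2$ invariant, so the nontrivial trajectories of $S_2$ on $\Sigma_2$ are exactly the horizontal parallels of the torus (circles of constant $x_3$ and constant $x_1^2+x_2^2$). Throughout I assume $a_{2,1}+b_{2,1}\neq0$ (otherwise $S_2$ is trivial) and $(a_{3,1},a_{3,2})\neq(0,0)$ (otherwise $X\sigma_2\equiv0$ and the circles $C_3,C_4$ do not exist).

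By Lemma~\ref{l6} the circles $C_3,C_4$ are cut out on $\Sigma_2$ by the vertical plane $P=\{a_{3,1}x_1+a_{3,2}x_2=0\}$, whose normal direction is $\mathbf{n}=(a_{3,1},a_{3,2},0)$. Since $C_3\cup C_4=\Sigma_2\cap P$ and $P$ passes through the origin, at a point $p\in C_3\cup C_4$ the tangent line to the circle is $T_p\Sigma_2\cap P$; consequently a trajectory of $S_2$ is transversal to $C_3,C_4$ at $p$ exactly when $S_2(p)$ fails to be tangent to $P$, that is, when $\langle S_2(p),\mathbf{n}\rangle\neq0$. So the whole statement reduces to checking this inequality along the two circles.

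The decisive step is to evaluate $S_2$ on $P$. Parametrizing $P$ by $(x_1,x_2)=s\,(a_{3,2},-a_{3,1})$ with $x_3$ free, a direct substitution gives $(-x_2,x_1,0)=s\,(a_{3,1},a_{3,2},0)=s\,\mathbf{n}$. Thus along $C_3\cup C_4$ the vector $S_2$ is the scalar multiple $\tfrac12(a_{2,1}+b_{2,1})\,s\,\mathbf{n}$ of the normal; in particular $S_2\perp P$ there. This scalar cannot vanish on the circles: since every point of $\Sigma_2$ satisfies $x_1^2+x_2^2\geq1$ while $x_1^2+x_2^2=s^2(a_{3,1}^2+a_{3,2}^2)$ on $P$, we must have $s\neq0$ on $C_3\cup C_4$, and $a_{2,1}+b_{2,1}\neq0$ by assumption. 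Hence $\langle S_2,\mathbf{n}\rangle\neq0$ along $C_3,C_4$, which is precisely the transversality claim.

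To finish I would read off the geometric consequence. Because $S_2$ is transversal to $C_3,C_4$, each horizontal-parallel trajectory genuinely crosses these circles as it rotates about the axis, rather than being tangent to them. Since $x_3$ is constant along the flow, such a trajectory stays in $\{x_3>0\}$ or in $\{x_3<0\}$, so it passes back and forth between $R_1$ and $R_2$ in the upper half, and between $R_3$ and $R_4$ in the lower half. The only delicate point in the argument is the transversality criterion on the surface, distinguishing tangency to the curve from tangency to the ambient plane $P$; once the identity $S_2=s\,\mathbf{n}$ on $C_3\cup C_4$ is in hand, the remainder is immediate.
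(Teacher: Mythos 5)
Your proof is correct, and it is worth noting that the paper does not actually supply one: it only remarks that this lemma ``follows the same techniques'' as Lemmas \ref{lema-ndm} and \ref{l4}, where the argument consists of intersecting the tangency set with the invariant planes of the rotation and counting finitely many intersection points. Your route is genuinely different and, for this particular statement, stronger: you correctly read the displayed field (modulo the paper's typo, which writes $\partial/\partial x_1$ twice) as the scalar multiple $\tfrac12(a_{2,1}+b_{2,1})$ of the rotation field about the $x_3$-axis, observe that on the plane $P=\{a_{3,1}x_1+a_{3,2}x_2=0\}$ one has $(-x_2,x_1,0)=s\,(a_{3,1},a_{3,2},0)$, and conclude that $S_2$ is exactly normal to $P$ along $C_3\cup C_4$, with the scalar $s$ nonvanishing because $x_1^2+x_2^2\geq 1$ on $\Sigma_2$. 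This yields transversality at \emph{every} point of $C_3\cup C_4$, whereas the counting argument sketched in the paper would only exclude the circles from being trajectories and would not by itself rule out an isolated tangential contact. Your hypotheses ($a_{2,1}+b_{2,1}\neq 0$ so that $S_2$ is nontrivial, and $(a_{3,1},a_{3,2})\neq(0,0)$ so that $C_3,C_4$ exist) are exactly the right nondegeneracy conditions, and the final reduction of ``crossing'' to the invariance of $x_3$ along the flow matches the region structure $R_1,\dots,R_4$ of Lemma \ref{l6}. The one step you flag as delicate --- identifying $T_pC$ with $T_p\Sigma_2\cap P$ --- does require that $\Sigma_2$ and $P$ meet transversally, but this holds because $P$ contains the axis of revolution, so the surface normal along $C_3\cup C_4$ lies in $P$ and is never parallel to the normal of $P$; you could add that one line to make the argument airtight.
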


\begin{obs} Note that if $\gamma_p$ is the trajectory of $S_2$ passing through $p\in \Sigma_2$, then  $\Sigma_2\setminus \left(\displaystyle \cup_{j=1}^4\cup_{p\in R_j} \gamma_{p}\right)=C_1\cup C_2$, that is, the curves $C_1,C_2$ are exactly the sets where we cannot define $S_2$.
\end{obs}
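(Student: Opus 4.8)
The plan is to exploit the fact, established just above the statement, that on the torus the sliding field reduces to the rotation field $S_2(\bx)=\tfrac12(a_{2,1}+b_{2,1})(-x_2,x_1,0)$ about the $x_3$-axis. Assuming it is non trivial, i.e.\ $a_{2,1}+b_{2,1}\neq0$ (as in Theorem B; otherwise every point is an equilibrium and the claim fails), its third component vanishes identically, so the height $x_3$ is a first integral. Moreover $\Sigma_2$ avoids the $x_3$-axis (every point has $x_1^2+x_2^2\geq1$), so $S_2$ has no equilibria on $\Sigma_2$ and each orbit $\gamma_p$ is a full circle contained in the horizontal plane $\{x_3=x_3(p)\}$. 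Since $S_2$ is tangent to $\Sigma_2$, this orbit is exactly a connected component of $\Sigma_2\cap\{x_3=x_3(p)\}$.

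First I would describe these slices. Writing $r^2=x_1^2+x_2^2$, the equation $\sigma_2=0$ is equivalent to $(r-2)^2+x_3^2=1$, so the plane $\{x_3=k\}$ with $0<|k|<1$ meets $\Sigma_2$ in two concentric circles $r=2\pm\sqrt{1-k^2}$, the planes $\{x_3=\pm1\}$ meet it in a single circle $r=2$, and the plane $\{x_3=0\}$ meets it in the two circles $r=1$ and $r=3$, which are precisely $C_1$ and $C_2$ of Lemma \ref{l6}. Hence $\Sigma_2\cap\{x_3=0\}=C_1\cup C_2$, and the orbits of $S_2$ foliate $\Sigma_2\setminus(C_1\cup C_2)$ by horizontal circles.

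The core of the proof is then the set identity. Since $R_1,R_2\subset\{x_3>0\}$ and $R_3,R_4\subset\{x_3<0\}$, any $p\in\bigcup_jR_j$ has $x_3(p)\neq0$, so $\gamma_p\subset\Sigma_2\setminus(C_1\cup C_2)$, giving $\bigcup_j\bigcup_{p\in R_j}\gamma_p\subseteq\Sigma_2\setminus(C_1\cup C_2)$. For the reverse inclusion I would take $q\in\Sigma_2$ with $x_3(q)\neq0$ and note that its horizontal circle is none of the tangency circles: it lies at height $\neq0$, whereas $C_1,C_2$ sit at height $0$, and by the preceding lemma it crosses the meridians $C_3,C_4$ transversally, hence in only finitely many points. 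All remaining points of that circle lie in $\bigcup_jR_j$, so choosing any such point $p$ gives $\gamma_q=\gamma_p$ with $p\in R_j$. The two inclusions combine to yield $\bigcup_j\bigcup_{p\in R_j}\gamma_p=\Sigma_2\setminus(C_1\cup C_2)$, equivalently that the complement is $C_1\cup C_2$.

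It remains to read off the interpretive clause that $C_1,C_2$ are exactly the circles on which $S_2$ cannot be defined. All four circles lie in the tangency set $\{X\sigma_2=0\}$ and so are excluded from the domain $\Sigma_2^*$ of \eqref{Sj}; the point is which of them can be recovered by closure. Along $C_3,C_4$ the orbits are transversal, so the concatenation rule (item e of Section \ref{nsvf}), together with the remark following it, lets a trajectory pass from $R_1$ to $R_2$ and from $R_3$ to $R_4$ across these circles, so $S_2$ is effectively defined there. Along $C_1,C_2$, by contrast, $S_2$ is tangent---these are themselves candidate orbits of the rotation field---so no transversal crossing is available, the neighbouring orbits merely circulate at heights $x_3\neq0$ without ever reaching them, and the sliding dynamics genuinely degenerate. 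I expect this last, conceptual step to be the main obstacle: the set-theoretic identity is immediate once the orbits are known to be horizontal circles, but distinguishing the ``passable'' tangency circles $C_3,C_4$ from the ``essential'' ones $C_1,C_2$ requires invoking the transversality lemma and the extension convention of Section \ref{nsvf}, and verifying that the degeneracy along $C_1,C_2$ is not removable.
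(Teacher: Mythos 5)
Your proposal is correct and follows exactly the reasoning the paper leaves implicit (the remark is stated without proof, resting on the surrounding lemmas): the sliding field is the rotation about the $x_3$-axis, so its orbits are the horizontal circles of the torus, the $x_3=0$ slice is precisely $C_1\cup C_2$, and every other horizontal circle meets $\bigcup_j R_j$. You also correctly read the paper's displayed formula for $S_2$ as having a typo (both terms written as $\partial/\partial x_1$), and your identification of which tangency circles are passable versus essential matches the two lemmas bracketing the remark.
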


\begin{lema}The tangency circles $C_1,C_2$ are singular tangency points, that is, they behave like singular points of $S_2$.
\end{lema}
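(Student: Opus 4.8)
The plan is to read off the dynamics from the explicit expression of $S_2$ as a rotation about the $x_3$-axis and to combine it with the location of $C_1,C_2$ in the equatorial plane $x_3=0$. Putting $c=\tfrac12(a_{2,1}+b_{2,1})$, the sliding field computed above is the linear field $S_2(\bx)=c(-x_2,x_1,0)$; in particular $\dot x_3\equiv 0$ along its flow, so every non-trivial trajectory lies in a horizontal plane $x_3=k$ and, since $\Sigma_2$ is a surface of revolution about the $x_3$-axis, is a closed circle centred on that axis. Thus the sliding trajectories foliate $\Sigma_2$ by horizontal circles, and the degenerate levels of this foliation occur exactly at $x_3=0$.

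Next I would record that $C_1,C_2$ are curves of double tangency. By Lemma \ref{l6} they lie in the plane $x_3=0$, where $X\sigma_2(\bx)=32x_3(a_{3,1}x_1+a_{3,2}x_2)$ vanishes, so that by the inelasticity relation \eqref{inelastic} one also has $Y\sigma_2(\bx)=-X\sigma_2(\bx)=0$. Hence on the whole tangency set both the numerator and the denominator of the quotient \eqref{Sj} vanish: $S_2$ is there a $0/0$ indeterminate whose continuous extension is the rotation $c(-x_2,x_1,0)$. A short computation giving $X^2\sigma_2=32(a_{3,1}x_1+a_{3,2}x_2)^2$ on $x_3=0$ shows the tangency is quadratic off the four corner points, so $C_1,C_2$ are curves of (two-)fold points rather than interior sliding points.

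The key geometric step is that this extension is tangent to $C_1,C_2$. Since $C_1=\{x_3=0,\ x_1^2+x_2^2=1\}$ and $C_2=\{x_3=0,\ x_1^2+x_2^2=9\}$ are circles centred on the $x_3$-axis in the plane $x_3=0$, they are themselves orbits of the rotation $c(-x_2,x_1,0)$; parametrizing either circle, its tangent vector is manifestly parallel to $S_2$. This is in sharp contrast with $C_3,C_4$, which lie in a vertical plane through the axis and are therefore crossed transversally by the rotation, as used in the preceding lemma. Consequently the trajectories issuing from the open regions $R_1,\dots,R_4$ all sit at some level $x_3=k\neq0$ and never reach $C_1\cup C_2$, which is precisely the content of the Remark preceding the statement.

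Finally I would conclude with the Filippov convention. Fix $p\in C_1\cup C_2$. The sliding trajectories near $p$ are pairwise disjoint closed circles lying at levels $x_3=k\neq0$, so none of them starts or ends at $p$; by item e) of Section \ref{nsvf} no concatenation is available and the trajectory through $p$ reduces to $\{p\}$. Thus the points of $C_1,C_2$ behave as equilibria of $S_2$, as claimed. I expect the genuinely delicate point to be this last implication: one must argue that the tangency of the extended field to the undefined locus forces the Filippov trajectory to be trivial, rather than promoting $C_1,C_2$ to periodic orbits. It is exactly the two-fold (double tangency) nature of $C_1,C_2$, as opposed to the transversal crossings along $C_3,C_4$, that rules out the latter and makes the singular interpretation the correct one.
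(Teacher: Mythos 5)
Your argument is correct and follows the route the paper itself indicates (the paper omits this proof, saying only that it uses the techniques of Lemmas \ref{lema-ndm} and \ref{l4} together with the preceding Remark): you read the sliding field as the rotation $\tfrac12(a_{2,1}+b_{2,1})(-x_2,x_1,0)$, observe that every genuine sliding orbit is a horizontal circle at a level $x_3=k\neq 0$ and therefore never reaches the double-tangency circles $C_1,C_2$ in $\{x_3=0\}$, and conclude via convention (e) of Section \ref{nsvf} that the trajectory through any point of $C_1\cup C_2$ is trivial. Your explicit treatment of the delicate point --- that tangency of the extended field to $C_1,C_2$ does not promote them to periodic orbits, precisely because these circles consist of double tangencies unreachable by any sliding trajectory --- is exactly what the paper's Remark encodes, so your write-up correctly supplies the details the paper leaves out.
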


\appendix
\section{\label{xixi}Polynomial $\Xi(s)$.}
The polynomial $\Xi(s)$ is given by
\[
\Xi(s)=\Xi_2 {s}^{2}+ \Xi_1 s+\Xi_0,
\]
where
\[
\begin{array}{lcl}
\Xi_2&=&
2\,a_{{2,2}}b_{{3,2}}a_{{3,2}}+2\,a_{{1,1}}b_{{3,1}}a_{{3,1}}+
a_{{2,1}}b_{{3,2}}b_{{3,1}}+a_{{2,1}}b_{{3,2}}a_{{3,1}}\\
&+&a_{{2,1}}a_{{3
,2}}b_{{3,1}}+a_{{2,1}}a_{{3,2}}a_{{3,1}}+a_{{1,2}}b_{{3,2}}b_{{3,1}}+
a_{{1,2}}b_{{3,2}}a_{{3,1}}\\
&+&a_{{1,2}}a_{{3,2}}b_{{3,1}}+a_{{1,2}}a_{{3
,2}}a_{{3,1}}+a_{{1,1}}{b_{{3,1}}}^{2}+a_{{1,1}}{a_{{3,1}}}^{2}+a_{{2,
2}}{b_{{3,2}}}^{2}+a_{{2,2}}{a_{{3,2}}}^{2}\\
\Xi_1&=&{a
_{{3,2}}}^{3}-2\,b_{{2,1}}b_{{3,1}}a_{{1,1}}-2\,b_{{2,1}}a_{{3,1}}a_{{
1,1}}-b_{{2,1}}b_{{3,2}}a_{{2,1}}-b_{{2,1}}a_{{3,2}}a_{{2,1}}-b_{{2,1}
}b_{{3,2}}a_{{1,2}}\\
&-&b_{{2,1}}a_{{3,2}}a_{{1,2}}+b_{{3,1}}b_{{3,2}}a_{{
1,3}}+a_{{3,1}}b_{{3,2}}a_{{1,3}}+b_{{3,1}}a_{{3,2}}a_{{1,3}}+a_{{3,1}
}a_{{3,2}}a_{{1,3}}\\
&-&2\,a_{{1,1}}b_{{3,1}}a_{{2,1}}-2\,a_{{1,1}}a_{{3,1
}}a_{{2,1}}-a_{{1,2}}b_{{3,2}}a_{{2,1}}-a_{{1,2}}a_{{3,2}}a_{{2,1}}+a_
{{3,1}}b_{{3,2}}b_{{3,1}}\\
&+&a_{{3,1}}a_{{3,2}}b_{{3,1}}+2\,a_{{3,2}}b_{{
3,2}}a_{{2,3}}-{a_{{2,1}}}^{2}b_{{3,2}}-{a_{{2,1}}}^{2}a_{{3,2}}+{a_{{
3,1}}}^{2}b_{{3,2}}+{a_{{3,1}}}^{2}a_{{3,2}}\\
&+&{b_{{3,2}}}^{2}a_{{3,2}}+
2\,b_{{3,2}}{a_{{3,2}}}^{2}+{b_{{3,2}}}^{2}a_{{2,3}}+{a_{{3,2}}}^{2}a_
{{2,3}}\\
\Xi_0&=&-a_{{1,3}}a_{{3,2}}a_{{2,1}}+2\,a_{{3,3}}b_{{3,2}}a_{
{3,2}}+2\,a_{{1,1}}a_{{2,1}}b_{{2,1}}-a_{{2,1}}b_{{3,2}}a_{{3,1}}-a_{{
3,1}}b_{{3,2}}b_{{2,1}}\\
&-&a_{{2,1}}a_{{3,2}}a_{{3,1}}-a_{{3,1}}a_{{3,2}}
b_{{2,1}}-a_{{1,3}}b_{{3,2}}a_{{2,1}}-a_{{1,3}}b_{{3,2}}b_{{2,1}}+a_{{
1,1}}{a_{{2,1}}}^{2}\\
&+&a_{{1,1}}{b_{{2,1}}}^{2}+a_{{3,3}}{b_{{3,2}}}^{2}
+a_{{3,3}}{a_{{3,2}}}^{2}-a_{{1,3}}a_{{3,2}}b_{{2,1}}
\end{array}
\]


\begin{thebibliography}{99}

\bibitem{13} W. H. Beyer. {\it CRC Standard Mathematical Tables}, 28th ed. Boca Raton, FL: CRC Press, pp. 210-211, 1987.

\bibitem{10} B. Brogliato. {\it Nonsmooth Mechanics: Models, Dynamics and Control}. Springer-Verlag, London, 1999.

\bibitem{5} D. Chillingworth, {\it Discontinuity geometry for an impact oscillator}. Dynam. Syst., {\bf 17} (2002), 389--420.

\bibitem{3} A. Colombo, M. di Bernardo, E. Fossas, M. R. Jeffrey, {\it Teixeira singularities in 3D switched feedback control systems}. Systems Control Lett. {\bf 59} (2010), 615--622. 

\bibitem{7} A. F. Filippov, {\it Differential Equations with Discontinuous Righthand Sides.} Math. Appl. (Soviet Ser.), Kluwer Academic Publishers, Dordrecht, The Netherlands, 1988.

\bibitem{2} M. Guardia, T. Seara, M. A. Teixeira, {\it Generic bifurcations of low codimension of planar Filippov systems}. J. Differential Equations {\bf 250} (2011), 1967--2023.

\bibitem{6} A. Jacquemard, D. J. Tonon, {\it Coupled systems of non-smooth differential equations}. Bulletin des Sciences Mathématiques {\bf 136} (2012), 239--255.

\bibitem{1} J. Llibre, P. R. da Silva, M. A. Teixeira, {\it Study of singularities in nonsmooth dynamical systems via singular perturbation}. SIAM J. Appl. Dyn. Syst. {\bf 8} (2009), 508--526.

\bibitem{12} C. R. Johnson, {\it On positive definite matrices}. The American Mathematical Monthly , Vol. 77, No. 3 (Mar., 1970), pp. 259-264

\bibitem{9} J. Sotomayor, M. A. Teixeira, {\it Regularization of Discontinuous Vector Fields}. International Conference on Differential Equations, Lisboa, Equadiff 95, (1996), 207--223.

\bibitem{4}M. A. Teixeira, {\it Pertubation Theory for Non-smooth Systems}. In: In Meyers, Robert (Ed. in chief). (Org.). Encyclopedia of Complexity and Systems Science. 1 ed. New York: Springer Verlag, 2009, v. 22, 6697--6719.


\bibitem{8} M. A. Teixeira, {\it Stability conditions for discontinuous vector fields.} J. Differential Equations {\bf 88} (1990), 15--29.

\bibitem{11} H. M. Zbib, {\it On the mechanics of large inelastic deformations: kinematics and constitutive modeling}. Acta Mechanica {\bf 96} (1993), 119--138.





\end{thebibliography}
\end{document}